\newtheorem{thm}{Lemma}
\newtheorem{lemma}[thm]{Lemma}
\newtheorem{corollary}[thm]{Corollary}
\newtheorem{remark}{Remark}
\newcommand{\vertiii}[1]{{\left\vert\kern-0.25ex\left\vert\kern-0.25ex\left\vert #1 
    \right\vert\kern-0.25ex\right\vert\kern-0.25ex\right\vert}}
\title{Removing the stabilization parameter in fitted and unfitted symmetric Nitsche formulations}
\author{Christoph Lehrenfeld
  \thanks{Institute for Computational and Applied Mathematics, e-mail: christoph.lehrenfeld@gmail.com}
}
\begin{document}
  \maketitle





\begin{abstract}
In many situations with finite element discretizations it is desirable or necessary to impose boundary or interface conditions not as essential conditions -- i.e. through the finite element space -- but through the variational formulation. One popular way to do this is Nitsche's method. 
In Nitsche's method a stabilization parameter $\lambda$ has to be chosen ``sufficiently large'' to provide a stable formulation. 
Sometimes discretizations based on a Nitsche formulation are criticized because of the need to manually choose this parameter. 
While in the discontinuous Galerkin community variants of the Nitsche method -- known as ``interior penalty'' method in the DG context -- are known which do not require such a manually chosen stabilization parameter, this has not been considered for Nitsche formulations in other contexts. 
We introduce and analyse such a parameter-free variant for two applications of Nitsche's method.
First, the classical Nitsche formulation for the imposition of boundary conditions with fitted meshes and secondly, an \emph{unfitted} finite element discretizations for the imposition of interface conditions is considered. 
The introduced variants of corresponding Nitsche formulations do not change the sparsity pattern and can easily be implemented into existing finite element codes. 
The benefit of the new formulations is the removal of the Nitsche stabilization parameter $\lambda$  while keeping the stability properties of the original formulations for a ``sufficiently large'' stabilization parameter $\lambda$. 
\end{abstract}

\section{Introduction}
Nitsche's method was originally introduced in \cite{nitsche71} and has been applied in many different contexts in order to weakly enforce interface or boundary conditions. 
The idea is to impose boundary or interface conditions not into the finite element space as essential condition but to change the variational formulation to achieve the condition at least in a weak sense. For the boundary value problem $- \Delta u = 0$ in $\Omega$ with $u|_{\partial \Omega}=0$ the classical Nitsche variational formulation is: Find $u \in V_h$ such that
$$
 \int_{\Omega} \nabla u \nabla v \,d{x}
 - \int_{\partial \Omega} \partial_n u \, v \,d{s}
 - \int_{\partial \Omega} \partial_n v \, u \,d{s}
 + \frac{\lambda}{h} \int_{\partial \Omega}  u v \,d{s} 
= 0 \quad \forall~v \in V_h,
$$
where $V_h$ is a finite element space with $V_h \subset H^1(\Omega)$ and $\partial_n v_h \in L^2(\partial \Omega)$ for all $v_h \in V_h$. 
Here $h$ is the characteristic mesh size and $\lambda$ is a parameter that has to be chosen sufficiently large.
The criticism on Nitsche's method is sometimes related to this parameter $\lambda$ and the ``sufficiently large'' condition. Although theoretical error analyses can give a precise lower bound on the choice of $\lambda$ in practice the parameter is typically chosen by estimating the order of magnitude or a trial and error approach. 
If the parameter is chosen too small, the formulation gets unstable. If however the stabilization parameter is chosen too large the condition number of the system matrix increases and error bounds degenerate. 
A suitable choice for $\lambda$ is specifically difficult if problems with a large contrast in the diffusion parameter are considered. 

In this paper we focus on \emph{unfitted} finite element discretizations with a weak enforcement of interface or boundary conditions with a \emph{symmetric} Nitsche-type formulation. In unfitted finite element methods a computational mesh is used which is not aligned to the geometries with respect to which a solution to a PDE should be approximated. We consider an elliptic interface problem with an unfitted interface. 
To transition into this problem we also consider the model problem which has been considered in the original paper \cite{nitsche71}, a Poisson problem on domain with a fitted boundary.

We give a brief overview on the literature of fitted and especially unfitted discretizations using Nitsche's method. 
Nitsche's method has a close relation to (stabilized) Lagrange multiplier methods. This connection allows to derive suitable Nitsche formulations from stabilized Lagrange multiplier formulations and has been nicely discussed in \cite{verfuerth91,stenberg95,Juntunen2015}.

While we want to focus on \emph{symmetric} Nitsche-type formulations we mention the papers \cite{freundstenberg95,burman12,Boiveau02092015} which investigate \emph{non-symmetric} Nitsche formulations and get rid of the ``sufficiently large'' condition on $\lambda$, \cite{freundstenberg95}, or the whole penalty term, \cite{burman12,Boiveau02092015}. We aim at achieving the same goal with a \emph{symmetric} formulation.

Many discontinuous Galerkin (DG) schemes -- especially the (symmetric) interior penalty method \cite{arnold1982interior} -- have a very close relation to Nitsche's method. The continuity of the solution is not build into the finite element space but has to be implemented by the variational formulation.
For DG schemes for elliptic problems many variants are known, cf. \cite{arnoldbrezzicockburnmarini02} for an overview of different methods. Among them there exist versions which get rid of the parameter $\lambda$ and are able to provide consistency, continuity, symmetry and stability. The approach discussed in this paper is in the same virtue as the DG method presented in \cite{bassirebay97,bassirebay97b} and analyzed in \cite{brezzietal99}.  

For unfitted interface problems a stable Nitsche formulation has been introduced in the original paper \cite{hansbo2002unfitted}. 
One important aspect of this method is the choice of a geometry-dependend averaging operator which is crucial for stability. 
We detail on this method below. 
Many discretization are based on this formulation, for instance \cite{hansbo05,reuskenlehrenfeld12,annavarapuetal12,hansboetal14,reuskenlehrenfeld14}. 

The major contribution of this paper is the introduction of new finite element formulations for the weak imposition of boundary and interface conditions for \emph{fitted} and \emph{unfitted} meshes. These new formulations inherit the stability from known Nitsche-type formulations but remove the stabilization parameter $\lambda$.

The paper is organised as follows. In Section \ref{sec:prelim} we introduce notation for Nitsche-type formulations and their analysis.
Based on a common framework we present Nitsche-type formulations from the literature and discuss their theoretical properties in Section \ref{sec:nitsches}. Modified formulations which get rid of the penalty parameter $\lambda$ are then introduced and discussed in Section \ref{sec:lift}.
The modified formulations involve a lifting operator. That an implementation of this lifting operator is practically feasible is shown in Section \ref{sec:impl}.

\section{Preliminaries} \label{sec:prelim}
\subsection{Problem classes}
We consider two classes of problems on which Nitsche's method is applied, cf. Fig. \ref{fig:cases}. A Boundary value problem with a boundary-\emph{fitted} mesh and an elliptic interface problem with an interface-\emph{unfitted} mesh.
\begin{figure}
\begin{center}
\begin{tabular}{c@{\qquad\qquad}c}
 \small fitted boundary value problem & \small unfitted interface problem\\ 
        \includegraphics[width=0.25\textwidth]{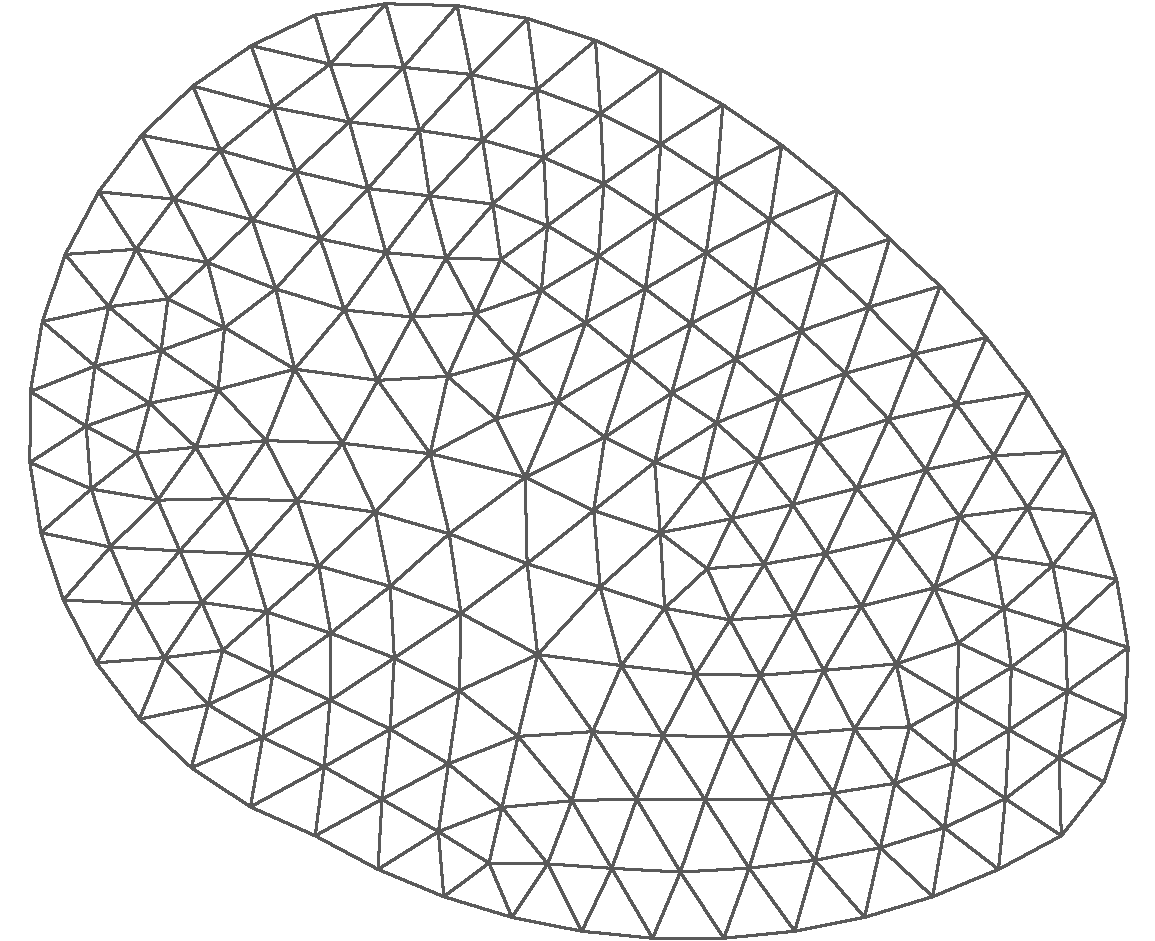}
  &
        \includegraphics[width=0.325\textwidth]{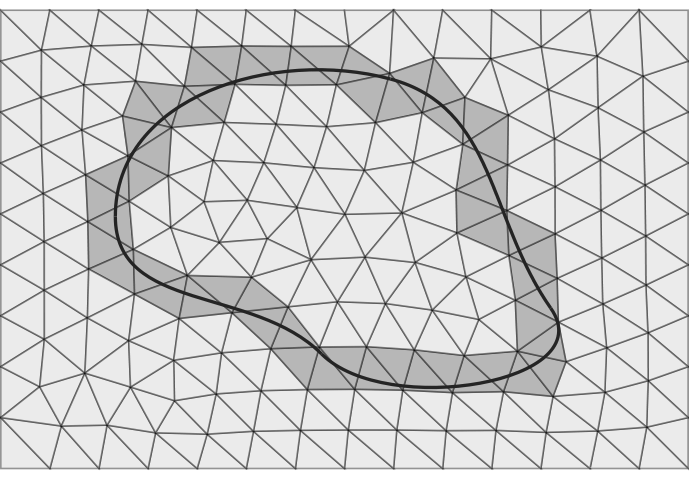}
\end{tabular}
\end{center}
\vspace*{-0.5cm}
\caption{Two situations where Nitsche's method is applied: A fitted Boundary value problems (left) and an unfitted interface problem (right).}
\label{fig:cases}
\end{figure}
In Section \ref{sec:nitsches} we introduce suitable Nitsche-type finite element formulations which are known to be stable for both problems.
To provide stability different versions of trace inverse inequalities play an important role. The stability of the classical Nitsche method for the model problem can be shown based on standard trace inverse inequalities for polynomials. 
For the interface problem with an interface-unfitted mesh a special weighting in the Nitsche method is required to obtain a similar result. 

\begin{remark}
We do not consider the case of an interface problem with a fitted mesh as the same techniques as for the Poisson problem with a boundary-fitted mesh can directly be carried over. Further the case of an unfitted boundary value problem is not consider. Existing stable Nitsche-type formulations for this problem introduce a second stabilization term with a second stabilization parameter, cf. \cite{burman2012fictitious}. With the techniques discussed in this paper it is not clear how to remove also this parameter. 
\end{remark}

\subsection{Unified notation for Nitsche formulations}
In both cases we assume to have a domain $\tilde \Omega$ on which a shape regular simplex triangulation $\mathcal{T}_h$ is available. Throughout the paper we assume quasi-uniformity with a characteristic mesh size $h$. This restriction simplifies the presentation but is not essential and can be generalized by exchanging $h$ with a localized version, $h_T$, a characteristic length of an element $T$. All scalar product's are $L^2$ scalar products, i.e. $(u,v)_S = \int_S u\ v \ ds$ for a corresponding domain $S$. 

We can write a suitable version of Nitsche's method for all considered problems in the form: Find $u \in V_h$ such that
\begin{equation} \label{eq:nitsche}
A(u,v) := a(u,v) + N^c(u,v) + N^c(v,u) + N^s(u,v) = f(v) \quad \text{ for all } v \in V_h,
\tag{N}
\end{equation}
with bilinear and linear forms which are yet to be defined. 
Here, the bilinear form $a(\cdot,\cdot)$ is responsible to provide consistency with respect to the volume terms, i.e. the inner part of the domain(s). The bilinear form $N^c(\cdot,\cdot)$ is responsible for the consistency w.r.t. boundary terms stemming from partial integration of the elliptic operator. The second term which involves $N^c(\cdot,\cdot)$ is added for symmetry (or adjoint consistency) reasons. Finally $N^s(\cdot,\cdot)$ is a term added to provide stability.  
In the standard formulation, we have
\begin{equation} \label{eq:lambda}
N^s(\cdot,\cdot) = N^s_{\lambda}(\cdot,\cdot) = \lambda \cdot N^s_{1} ( \cdot, \cdot)
\end{equation}
where $N^s_{\lambda}(\cdot,\cdot)$ is the $\lambda$-scaled penalty term. 

\subsection{Analysis of (symmetric) Nitsche formulations}
The analysis of corresponding Nitsche's methods uses standard arguments. With coercivity, continuity and consistency of the bilinear form w.r.t. suitable spaces and norms a C\'ea-like Lemma can be obtained. We briefly introduce the three criteria -- in terms of the equations \eqref{eq:coerc}-\eqref{eq:consist} below -- together with the notation for the involved norms and spaces. The criteria are fulfilled for both formulations mentioned below. 

We use the notation $a \lesssim b$ ($a \gtrsim b$) if there exists a constant $c$ independent of discretization or material parameters such that $a \leq c b$ ($a \geq c b$). If $a \lesssim b$ and $a \gtrsim b$ we write $a \simeq b$.

\paragraph{Coercivity}
The crucial component with respect to the stabilization parameter $\lambda$ is coercivity. 
For sufficiently large $\lambda$ all considered Nitsche formulations provide that the following holds:
\begin{equation}\label{eq:coerc}
A(u_h,u_h) \gtrsim \vertiii{u_h}_A^2 \text{ for all }u_h \in V_h \text{ with } \vertiii{v}_A := \sqrt{a(v,v)+N^s(v,v)}.
\tag{A}
\end{equation}
In the analysis a stronger norm is needed to bound the flux terms in the bilinear form $N^c(\cdot,\cdot)$. We denote this norm by $\vertiii{\cdot}$ and require that there holds 
\begin{equation}\label{eq:normeq}
\vertiii{v_h} \simeq \vertiii{v_h}_A \text{ for all } v_h \in V_h,
\tag{B}
\end{equation}
so that \eqref{eq:coerc} yields $A(u_h,u_h) \gtrsim \vertiii{u_h}^2$ for all $u_h \in V_h$. 
\paragraph{Continuity}
On the space $V_h + V_{\text{reg}}$ with $V_{\text{reg}}$ so that $A(u,v)$ (especially $N^c(u,v)$) is well-defined for $u,v \in V_{\text{reg}}$ we then have continuity in the sense that
\begin{equation} \label{eq:cont}
 A(u,v) \lesssim \vertiii{u} \ \vertiii{v} \quad u,v \in V_h + V_{\text{reg}}.
\tag{C}
\end{equation}
\paragraph{Consistency}
With $u \in V_{\text{reg}}$ the solution of the original problem and $u_h \in V_h$ the solution to the discrete one, we demand Galerkin orthogonality, i.e.
\begin{equation}\label{eq:consist}
A(u-u_h,v_h)=0 \text{ for all }v_h \in V_h.
\tag{D}
\end{equation}

With these consistency, continuity and coercivity results we have a version of the well-known C\'ea Lemma:
\begin{lemma}
Let $u \in V_{\text{reg}}$ be the solution to the original problem and $u_h \in V_h$ the solution to the discrete one.
If the equations \eqref{eq:coerc}-\eqref{eq:consist} are fulfilled, then there holds
\begin{equation}
\vertiii{u-u_h} \lesssim \inf_{v_h \in V_h} \vertiii{u-v_h}.
\end{equation}
\end{lemma}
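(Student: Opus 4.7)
The plan is to follow the standard Céa argument, with one bookkeeping wrinkle caused by the two-norm setup: coercivity \eqref{eq:coerc} delivers control in the $\vertiii{\cdot}_A$ norm, whereas continuity \eqref{eq:cont} is stated in the stronger norm $\vertiii{\cdot}$, so the equivalence \eqref{eq:normeq} on $V_h$ has to do the translation.

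First I would pick an arbitrary $v_h \in V_h$ and split $u-u_h = (u-v_h) + (v_h-u_h)$. Since $v_h - u_h \in V_h$, coercivity \eqref{eq:coerc} gives
\[
\vertiii{v_h-u_h}_A^2 \;\lesssim\; A(v_h-u_h,\, v_h-u_h).
\]
Galerkin orthogonality \eqref{eq:consist} allows me to insert $u$: $A(u-u_h, v_h-u_h)=0$ and hence
\[
A(v_h-u_h,\, v_h-u_h) \;=\; A(v_h-u,\, v_h-u_h).
\]
Continuity \eqref{eq:cont} bounds the right-hand side by $\vertiii{v_h-u}\,\vertiii{v_h-u_h}$, and then the norm equivalence \eqref{eq:normeq}, applied to the discrete function $v_h-u_h$, converts the left-hand side: $\vertiii{v_h-u_h}^2 \lesssim \vertiii{v_h-u_h}_A^2$. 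Dividing by $\vertiii{v_h-u_h}$ (the case $\vertiii{v_h-u_h}=0$ being trivial) yields $\vertiii{v_h-u_h} \lesssim \vertiii{u-v_h}$.

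Finally, the triangle inequality for $\vertiii{\cdot}$ gives
\[
\vertiii{u-u_h} \;\leq\; \vertiii{u-v_h} + \vertiii{v_h-u_h} \;\lesssim\; \vertiii{u-v_h},
\]
and taking the infimum over $v_h \in V_h$ finishes the proof. There is no real obstacle here: the argument is entirely routine once the four hypotheses are in place. The only point requiring any care is making sure the norm equivalence \eqref{eq:normeq} is invoked only on the discrete difference $v_h-u_h$ (where it is available), while the continuity estimate \eqref{eq:cont} and the triangle inequality are used in the larger norm $\vertiii{\cdot}$ on $V_h+V_{\text{reg}}$, where $u-v_h$ lives.
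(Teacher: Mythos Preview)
Your argument is correct and is exactly the standard C\'ea-type proof one would expect here; you handle the two-norm setup carefully, invoking \eqref{eq:normeq} only on the discrete difference $v_h-u_h$ where it is available. The paper itself does not supply a proof of this lemma, simply presenting it as the well-known consequence of \eqref{eq:coerc}--\eqref{eq:consist}, so there is nothing to compare against beyond noting that your write-up fills in precisely the routine details the paper leaves implicit.
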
 

Below, in Section \ref{sec:lift} we propose an alternative to the stabilization form \eqref{eq:lambda} of the form
$$
N^s(\cdot,\cdot) = N^s_{nn}(\cdot,\cdot) + N^{s}_1(\cdot,\cdot)
$$
where $N^s_{nn}(\cdot,\cdot)$ is responsible to provide non-negativity in the sense that 
$$
  A(u,u) - N^s_1(u,u) \gtrsim a(u,u)
$$
which implies \eqref{eq:coerc} with $\lambda=1$. Then, $N^s_1(\cdot,\cdot)$ is only required to provide that $A(\cdot,\cdot)$ induces a norm. The scaling of this term is however independent of any stability requirement.
In the following section we specify the problem classes and the bilinear forms, spaces and norms corresponding to suitable discretization which are known to be stable (for sufficiently large $\lambda$).

\section{Stable Nitsche formulations} \label{sec:nitsches}
In the following subsections we will introduce suitable discretizations for the considered problems and show the conditions \eqref{eq:coerc}-\eqref{eq:consist}.
\subsection{Nitsche formulation for a boundary value problem with a body-fitted mesh} \label{sec:fittedpoisson}
In the fitted case we have $\tilde \Omega = \Omega$ and the model problem is
\begin{equation}
  -  \Delta u = f \mbox{ in } \Omega, ~ u = g~ \mbox{ on } \partial \Omega
\end{equation}
In this case the Nitsche's method, as introduced in \cite{nitsche71}, takes the form of \eqref{eq:nitsche} with 
$$
a(u,v)=(\nabla u, \nabla v)_{\Omega}, \ 
N^c(u,v) = (-  \partial_n u, v)_{\partial \Omega} \text{ and }
N^s(u,v) = N_\lambda^s(u,v) = \frac{\lambda}{h} (v, v)_{\partial \Omega}.
$$
We briefly verify \eqref{eq:coerc}-\eqref{eq:consist} with respect to suitable spaces and norms.
As the finite element space $V_h$ we consider a standard finite element space of continuous piecewise polynomials up to degree $k$. 
The stronger norm on $V_h + V_{\text{reg}}$ with $V_{\text{reg}} = H^2(\Omega)$ is
$$
\vertiii{u}^2 = \vertiii{u}^2_A + h \Vert \nabla u \Vert_{\partial\Omega}^2,
$$
and continuity on $V_h + V_{\text{reg}}$ with respect to $\vertiii{\cdot}$ follows directly with Cauchy-Schwarz, so that we have \eqref{eq:cont}. Consistency follows directly from partial integration, so that also \eqref{eq:consist} holds.
An essential estimate to prove \eqref{eq:coerc} is the following trace inverse inequality:
$$
h \Vert \nabla u \Vert_{\partial\Omega \cap T}^2 \leq h \Vert \nabla u \Vert_{\partial T}^2 \leq c_{\text{tr}} \Vert \nabla u \Vert_{T}^2, \quad u \in \mathcal{P}^k(T),~ T\in\mathcal{T}_h
$$
with a constant $c_{\text{tr}}$ only depending on the shape regularity of the element $T$ and the polynomial degree $k$, cf. \cite{warburtonhesthaven03}. Assuming that $c_{\text{tr}}$ denotes the upper bound of the shape regularity constant on all elements in the mesh, then we can easily show \eqref{eq:normeq} and the estimate 
\begin{equation} \label{eq:help}
2 N^c(u_h,u_h) \leq 2 h^{\frac12} \Vert \nabla u_h \Vert_{\partial\Omega} \cdot h^{-\frac12} \Vert u_h \Vert_{\partial\Omega} \leq \frac{8 c_{\text{tr}}}{\lambda} a(u_h,u_h) + \frac12 N^s_\lambda(u_h,u_h)
\end{equation}
from which we deduce \eqref{eq:coerc} for $\lambda$ sufficiently large ( for instance $\lambda > 16 c_{\text{tr}}$). 

\subsection{Nitsche formulation for an interface problem with an unfitted mesh} \label{sec:nuif}
For the case of an unfitted interface we assume $\tilde \Omega = \Omega$ but have an additional internal interface $\Gamma \subset \Omega$ which separates $\Omega$ into two disjoint sub-domains $\Omega_1$ and $\Omega_2$ which are unfitted, i.e. not aligned with the mesh.
Inside the domains only diffusion takes place, the diffusion coefficient $\alpha$ may however be discontinuous across the interface $\Gamma$. 
On the boundary we consider homogeneuous Dirichlet conditions to be implemented as essential conditions. This simplifies the presentation. For a weak imposition of the boundary conditions, we refer to the treatment of the case in section \ref{sec:fittedpoisson}. At the interface two conditions are posed: continuity of the solution and conservation of the flux. 
\begin{equation}
  - \alpha \Delta u = f \mbox{ in } \Omega_i, \quad u = 0~ \mbox{ on } \partial \Omega, \quad
  [\![ u ]\!] = 0 ~\mbox{ on } \Gamma, \quad
  [\![ - \alpha \partial_n u ]\!] = 0 ~\mbox{ on } \Gamma.
\end{equation}
In unfitted FEM 
a famous formulation of the problem goes back to the original paper \cite{hansbo2002unfitted}.
The finite element space depends on $\Gamma$ and is defined as $V_h = V_h^1 +V_h^2$ where $V_h^i = W_h|_{\Omega_i}$ is the fictitious domain finite element space related to domain $\Omega_i$ with the underlying standard finite element space $W_h$ which again is the space of continuous piecewise polynomial functions. The variational formulation proposed in \cite{hansbo2002unfitted} can be casted into the form of \eqref{eq:nitsche} with 
\begin{align*}
a(u,v) = (\alpha \nabla u, \nabla v)_{\Omega_1 \cup \Omega_2}, \
N^c(u,v) = ( \{\!\!\{ - \alpha \partial_n u \}\!\!\}, [\![v]\!])_{\Gamma}
\text{ and } N^s(u,v) = \frac{\lambda}{h} ( [\![u]\!],  [\![v]\!])_{\Gamma}.
\end{align*}
where $[\![\cdot]\!]$ is the usual jump operator across the interface $\Gamma$. 
The averaging operator $\{\!\!\{ v \}\!\!\} = \kappa_1 v|_{\Omega_1} + \kappa_2 v|_{\Omega_2}$ takes an important role to provide stability. 
With $V_{\text{reg}} := H^2(\Omega_1) \cup H^2(\Omega_2)$ and the norm 
$
\vertiii{u}^2 = \vertiii{u}_A^2 + h \Vert \{\!\!\{ \nabla u \}\!\!\} \Vert_{\Gamma}^2
$ 
one easily shows \eqref{eq:cont} and \eqref{eq:consist}. It remains to show \eqref{eq:coerc} and \eqref{eq:normeq}.
In the following we assume that a stable weighting is used, so that the following inverse estimate holds:
$$
\alpha_i^2 \kappa_i^2 h \Vert \nabla u \Vert_{\Gamma \cap T}^2 \leq c_{\text{tr}} \alpha_i \Vert \nabla u \Vert_{T_i}^2, \quad u \in \mathcal{P}^k(T_i),~~T_i := T \cap \Omega_i,~~ i=1,2, .
$$
for a constant $c_{\text{tr}}$ only depending on the shape regularity of $T$ (not $T_i$!), the polynomial degree $k$ and the diffusion parameter $\alpha_i$. 
We note that a carefully chosen stabilization parameter $\lambda$ in $N^s(u,v) = N_\lambda^s(u,v)$ scales with $c_{\text{tr}}$ and hence with the diffusion parameter $\alpha$.

For piecewise linears ($k=1$) and the weighting $\kappa_i = {|T_i|}/{|T|}$ the inverse inequality has been proven in \cite{hansbo2002unfitted}. For higher order discretizations we refer to the discussion of weightings that lead to stable discretizations in \cite[Remark 6]{lehrenfeld2016analysis}. Analogously to the estimates in \eqref{eq:help} this allows to prove \eqref{eq:coerc} and \eqref{eq:normeq}.
Together with the optimal approximation properties of $V_h$, see also \cite{hansbo2002unfitted}, we can obtain optimal order error bounds provided that $\lambda$ is chosen sufficiently large.

\section{Modified formulations to remove the parameter $\lambda$} \label{sec:lift}
In the following we introduce a new stabilization bilinear form based on a lifting operator and reestablish \eqref{eq:coerc}-\eqref{eq:consist} with this new parameter-free stabilization term and $\lambda=1$ in the penalty term.
For the formulations in the Sections \ref{sec:fittedpoisson}-\ref{sec:nuif} the bilinear forms $a(\cdot,\cdot)$, $N^c(\cdot,\cdot)$ and $N^s(\cdot,\cdot)$ can easily be decomposed into element contributions which are indicated by an index $T$, e.g. $a_T(\cdot,\cdot)$, so that $a(\cdot,\cdot) = \sum_{T\in\mathcal{T}_h} a_T(\cdot,\cdot)$. 
We introduce and analyse the modified formulation for the fitted Poisson case and afterwards translate it to the unfitted interface problem.
\subsection{The modified formulation for the fitted Poisson problem}
\subsubsection*{The discrete local lifting operator $\mathcal{L}(\cdot)$}
On elements which are located at the boundary we introduce an element-wise lifting into the space of polynomials which are orthogonal to constants, $\mathcal{P}_0^k(T) := \mathcal{P}^k(T) \setminus \mathcal{P}^0(T)$. The lifting is defined element-wise as 
\begin{equation} \label{eq:locallift}
\mathcal{L}_T: H^1(T) \rightarrow \mathcal{P}_0^k(T), \qquad a_T(\mathcal{L}_T(u),v) = N^c_T(v,u) \quad \forall v \in \mathcal{P}^k_0(T)
\end{equation}
We note that $\mathcal{L}_T$ is an element-local operator and that \eqref{eq:locallift} also holds for $v \in \mathcal{P}^k(T)$ as $a_T(\cdot,w)=N_T^c(w,\cdot)=0$ for $w \in \mathcal{P}^0(T)$. Further, by construction there holds $\mathcal{L}_T(u) \perp \mathcal{P}^0(T)$ and $\mathcal{L}_T(u) = 0$ if $u|_{\partial \Omega \cap T}=0$. 
We redefine $a(u,v) := \sum_{T\in \mathcal{T}_h} a_T(u,v)$ with $a_T(u,v) = (\nabla u, \nabla v)_{T}$, i.e. in a broken sense so that $a(\cdot,\cdot)$ is also well defined for only element-wise $H^1(T)$ functions. 
On elements which are not located at the boundary we set $\mathcal{L}_T(\cdot)=0$ and define the global lifting
$$
\mathcal{L}: H^1(\mathcal{T}_h) \rightarrow \bigoplus_{T\in\mathcal{T}_h} \mathcal{P}_0^k(T) \quad\text{ with }\quad \mathcal{L}(u)|_T := \mathcal{L}_T(u).
$$
Note that in general $\mathcal{L}(u)$ is discontinuous across element interfaces.
With the definition of the lifting we have $a(\mathcal{L}(u),v) = N^c(v,u)$ for all $v \in V_h^{\text{disc}} := \bigoplus_{T\in\mathcal{T}_h} \mathcal{P}^k(T)$ and hence especially for $v \in V_h \subset V_h^{\text{disc}}$.

\subsubsection*{Analysis of the modified Nitsche formulation for fitted boundary value problems}
The lifting allows to characterize the nonsymmetric term $N^c(\cdot,\cdot)$ in terms of the symmetric bilinear form $a(\cdot,\cdot)$ and the lifting operator applied on one of the arguments. This characterization will help to control $N^c(\cdot,\cdot)$ in terms of $a(\cdot,\cdot)$ and a new symmetric bilinear form involving the lifting. 
We exploit this in the following central result.
\begin{lemma} \label{lem:pos}
The bilinear form 
\begin{equation}\label{Ann}
A^{nn}(u,v) := a(u,v) + N^c(u,v) + N^c(v,u)+ \underbrace{2 a(\mathcal{L}(u),\mathcal{L}(v))}_{=:N_{nn}^s(u,v)}, \quad u,v \in V_{\text{reg}} = H^2(\Omega)
\end{equation}
is non-negative in $V_h$ and there holds
$
 A^{nn}(u_h,u_h) \geq \frac12 a(u_h,u_h) \text{ for all }u_h \in V_h.
$
\end{lemma}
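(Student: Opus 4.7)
The plan is to exploit the defining identity of the lifting operator, which converts the non-symmetric consistency term $N^c(u_h,u_h)$ into an $a$-inner-product involving $\mathcal{L}(u_h)$, and then absorb it via Cauchy-Schwarz and Young's inequality against the two ``good'' terms $a(u_h,u_h)$ and $N^s_{nn}(u_h,u_h) = 2a(\mathcal{L}(u_h),\mathcal{L}(u_h))$.

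First I would write out
$$A^{nn}(u_h,u_h) = a(u_h,u_h) + 2 N^c(u_h,u_h) + 2 a(\mathcal{L}(u_h),\mathcal{L}(u_h)).$$
The crucial step is to note that since $u_h \in V_h \subset V_h^{\text{disc}}$, the identity $a(\mathcal{L}(w),v) = N^c(v,w)$ (which follows from the element-local definition \eqref{eq:locallift} together with the fact that $\mathcal{L}_T$ vanishes on interior elements) applies with $w = v = u_h$, giving
$$N^c(u_h,u_h) = a(\mathcal{L}(u_h),u_h).$$

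Next, since $a(\cdot,\cdot)$ is a (broken) symmetric positive semidefinite bilinear form, Cauchy-Schwarz and Young's inequality yield
$$2\, a(\mathcal{L}(u_h),u_h) \;\geq\; -2\, a(\mathcal{L}(u_h),\mathcal{L}(u_h))^{1/2}\, a(u_h,u_h)^{1/2} \;\geq\; -2\, a(\mathcal{L}(u_h),\mathcal{L}(u_h)) - \tfrac{1}{2}\, a(u_h,u_h).$$
Substituting into the expanded form of $A^{nn}(u_h,u_h)$, the $\pm 2 a(\mathcal{L}(u_h),\mathcal{L}(u_h))$ terms cancel and we obtain $A^{nn}(u_h,u_h) \geq \tfrac{1}{2} a(u_h,u_h)$.

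I do not anticipate a real obstacle; the only subtlety is verifying that $\mathcal{L}(u_h) \in V_h^{\text{disc}}$ so that the lifting identity may legitimately be tested against $v = u_h$ (and against itself), and that $a(\cdot,\cdot)$ is well-defined on the discontinuous space so Cauchy-Schwarz applies. Both points are built into the broken redefinition of $a$ and the element-wise codomain of $\mathcal{L}$, so once those are recorded the estimate is essentially immediate. The chosen Young constant $\tfrac{1}{2}$ is what produces the factor $\tfrac{1}{2}$ on the right-hand side; any other admissible split would give an analogous bound with a different constant.
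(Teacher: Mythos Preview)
Your proof is correct and follows essentially the same approach as the paper: use the lifting identity $N^c(u_h,u_h)=a(\mathcal{L}(u_h),u_h)$ for $u_h\in V_h$, then apply Cauchy--Schwarz and Young's inequality (with the same splitting constant) to absorb the cross term into $a(u_h,u_h)$ and $2a(\mathcal{L}(u_h),\mathcal{L}(u_h))$. The paper states the bound as $2N^c(u_h,u_h)\le N^s_{nn}(u_h,u_h)+\tfrac12 a(u_h,u_h)$, which is exactly your lower bound rewritten.
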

\begin{proof}
Consider $u_h \in V_h$. Then with Young's inqualitywe have
$$
 2 N^c(u_h,u_h) = 2 a(\mathcal{L}(u_h),u_h) \leq 2 a(\mathcal{L}(u_h),\mathcal{L}(u_h)) + \frac12 a(u_h,u_h) = N_{nn}^s(u_h,u_h) + \frac12 a(u_h,u_h)
$$
from which we can deduce the claim.
\end{proof}
\noindent
Note that $N_{nn}^{s}(u,v)$ is well defined on $H^1(\Omega)$.
The bilinear form $A^{nn}(u,v)$ only offers control on the semi-norm $\sqrt{a(u,u)}$. 
In order to have control of a reasonable norm, i.e. in order to have control on the boundary values, we add the bilinear form $N^s_1(\cdot,\cdot)$. Note however that we can choose $\lambda=1$ (or any other positive number) as the only purpose of this part is to add control on the boundary integral. 
Due to the lifting part in $a(\mathcal{L}(\cdot),\mathcal{L}(\cdot))$ the bilinear form $N^s_{\lambda}(\cdot,\cdot)$ is no longer needed to control the non-symmetric terms $N^c(\cdot,\cdot)$. 
With 
$$
A^\ast(u,v) := A(u,v) \text{ as in \eqref{eq:nitsche} with } N^s(\cdot,\cdot) = 2 a(\mathcal{L}(\cdot),\mathcal{L}(\cdot)) + N^s_1(\cdot,\cdot)
$$
we can directly conclude the following corollary from which \eqref{eq:coerc} follows.
\begin{corollary}\label{cor:coerc}
The bilinear form $A^\ast(\cdot,\cdot)$ is well-defined on $V_{\text{reg}} + V_h$ and coercive on $V_h$ w.r.t. the norm $\vertiii{v}_A$ with coercivity constant $\frac12$.
\end{corollary}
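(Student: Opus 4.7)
The plan is to reduce the corollary to Lemma~\ref{lem:pos} via the splitting $A^\ast(u,v) = A^{nn}(u,v) + N^s_1(u,v)$, which isolates the lifting-based stabilization $N^s_{nn}(\cdot,\cdot) = 2\, a(\mathcal{L}(\cdot),\mathcal{L}(\cdot))$ (responsible for controlling the non-symmetric consistency terms) from the penalty $N^s_1(\cdot,\cdot)$ added only to promote the semi-norm $\sqrt{a(\cdot,\cdot)}$ into a full norm on $V_h$.

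For well-definedness on $V_{\text{reg}} + V_h$, I would observe that, by construction, $\mathcal{L}_T$ is a bounded linear map $H^1(T) \to \mathcal{P}^k_0(T)$, and $V_h + V_{\text{reg}} \subset H^1(\mathcal{T}_h)$ since $V_{\text{reg}} = H^2(\Omega)$, so the new term $2\, a(\mathcal{L}(u),\mathcal{L}(v))$ is well-defined; all other contributions in $A^\ast$ have already been noted to be well-defined on this space in Section~\ref{sec:fittedpoisson}.

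For coercivity on $V_h$, I would apply Lemma~\ref{lem:pos} to the $A^{nn}$-part, obtaining $A^{nn}(u_h,u_h) \geq \tfrac{1}{2} a(u_h,u_h)$, and then simply add the non-negative contribution $N^s_1(u_h,u_h)$ on top to get
\[
A^\ast(u_h,u_h) \;\geq\; \tfrac{1}{2}\, a(u_h,u_h) + N^s_1(u_h,u_h) \;\geq\; \tfrac{1}{2}\left(a(u_h,u_h) + N^s_1(u_h,u_h)\right),
\]
which is the claimed $\tfrac{1}{2}$-coercivity after identifying the right-hand side with $\tfrac{1}{2}\vertiii{u_h}_A^2$.

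The only delicate point -- and the one I expect to be the main obstacle -- is interpretational: the norm $\vertiii{\cdot}_A$ defined in \eqref{eq:coerc} is built from whichever $N^s$ is plugged into $A$, and for $A^\ast$ it formally also contains the extra lifting term $N^s_{nn}$. To recover the stated constant $\tfrac{1}{2}$ against the full norm, I would complement the argument above by the elementary estimate $a(\mathcal{L}_T(u_h),\mathcal{L}_T(u_h)) \lesssim h^{-1} \|u_h\|_{\partial\Omega \cap T}^2$, obtained by testing \eqref{eq:locallift} with $v = \mathcal{L}_T(u_h)$ and combining Cauchy--Schwarz with the trace inverse inequality from Section~\ref{sec:fittedpoisson}. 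This shows that $N^s_{nn}$ is controlled by $c_{\text{tr}}\, N^s_1$, so the two candidate norms on $V_h$ are equivalent and the $\tfrac{1}{2}$-coercivity transfers to $\vertiii{\cdot}_A$ in the sense of \eqref{eq:coerc}.
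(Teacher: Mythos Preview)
Your argument is correct and is precisely the paper's intended one: the corollary is stated as a direct consequence of Lemma~\ref{lem:pos}, obtained by adding the non-negative term $N^s_1(u_h,u_h)$ to the bound $A^{nn}(u_h,u_h)\geq\tfrac12 a(u_h,u_h)$. Your closing remark about $\vertiii{\cdot}_A$ formally containing $N^s_{nn}$ is a genuine subtlety the paper glosses over; the estimate $a(\mathcal{L}(u),\mathcal{L}(u))\leq c_{\text{tr}}\,h^{-1}\Vert u\Vert_{\Gamma}^2$ you invoke is exactly what the paper proves immediately afterwards in \eqref{eq:contest} to establish \eqref{eq:normeq}, so the constant $\tfrac12$ should be read against $a+N^s_1$ with equivalence to the full norm supplied by that bound.
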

\noindent
Consistency in the form of \eqref{eq:consist} immediately follows from the fact that $\mathcal{L}(u) = 0$ for $u|_{\partial \Omega}=0$. 
One might get the impression that the trace inverse inequality constant $c_{\text{tr}}$ does not play a role in the formulation with $A^\ast(\cdot,\cdot)$. That this is not true becomes obvious if we investigate the continuity of the bilinear form $a(\mathcal{L}(\cdot),\mathcal{L}(\cdot))$.
We have 
\begin{equation}\label{eq:contest}
\begin{split}
& a_T(\mathcal{L}(u),\mathcal{L}(u))  = N^c_T(\mathcal{L}(u),u) \leq \Vert \nabla \mathcal{L}(u) \Vert_{\Gamma\cap T} \Vert u \Vert_{\Gamma\cap T} \leq c_{\text{tr}}^{\frac12} h^{-\frac12}
\Vert \nabla \mathcal{L}(u) \Vert_{T} \Vert u \Vert_{\Gamma \cap T} \\
& \quad \leq \frac12 a_T(\mathcal{L}(u),\mathcal{L}(u)) + \frac12 \frac{c_{\text{tr}}}{h} \Vert u \Vert_{\Gamma\cap T}^2 
\quad \Longrightarrow \quad 
a(\mathcal{L}(u),\mathcal{L}(u)) \leq \frac{c_{\text{tr}}}{h} \Vert u \Vert_{\Gamma}^2~~~\forall~ u \in H^1(\Omega)
\end{split}
\end{equation}
and hence we have \eqref{eq:normeq} and \eqref{eq:cont} with constants depending on $c_{\text{tr}}$.
We see that the constant from the trace inverse estimate is still very important and enters the error analysis. However, we do not need to estimate the constant $c_{\text{tr}}$ explicitly and hence get rid of the penalty parameter $\lambda$ and the ``sufficiently large'' condition.

\subsection{The modified formulation for the unfitted interface problem} \label{sec:moduip}
The idea from the previous section can be carried over to unfitted interface problems. For this case the idea has already been introduced in \cite[Section 2.2.3.2]{lehrenfeld2015diss}. We define a lifting operator similar to \eqref{eq:locallift}, but use the space of piecewise polynomials which are orthogonal to constants on each sub-element, $\mathcal{P}_0^k(T_{1,2}) := \{ v \in L^2(T), v|_{T_1} \in \mathcal{P}_0^k(T_i),~i=1,2\}$. On cut elements we set
\begin{equation} \label{eq:locallift2}
\mathcal{L}_T: H^1(T_1\cup T_2) \rightarrow \mathcal{P}_0^k(T_{1,2}), \text{ s.t. } \quad a_T(\mathcal{L}_T(u),v) = N^c_T(v,u) \quad \forall v \in \mathcal{P}^k_0(T_{1,2}),
\end{equation}
on uncut elements we set $\mathcal{L}_T(\cdot)=0$. Overall we again have $a(\mathcal{L}(u),v) = N^c(u,v) ~~\forall v \in V_h$. With this property Lemma \ref{lem:pos}, Corollary \ref{cor:coerc} and \eqref{eq:contest} can directly be carried over and we have \eqref{eq:coerc}, \eqref{eq:normeq} and \eqref{eq:cont}. For \eqref{eq:consist} we note that $\mathcal{L}(u) = 0$ if $[\![ u ]\!]=0$.

\section{Implementational aspects}\label{sec:impl}
In this section we want to explain that the implementation of the bilinear form $2 a(\mathcal{L}(\cdot),\mathcal{L}(\cdot))$ is fairly simple. We explain the procedure for the case of the fitted Poisson problem. It however easily translates to the unfitted interface problems (or similar problems).
To implement the element-local lifting $w_h = \mathcal{L}_T(u_h)$ of a local finite element function $u_h$ and an element $T$ that is located at the boundary, we solve for $w_h$ with $u_h,w_h \in \mathcal{P}^k(T)$, such that
\begin{equation} \label{eq:local}
  a_T(w_h,v_h) +  k_T(w_h,v_h) = N^c_T(v_h,u_h), \quad \forall v_h \in \mathcal{P}^k(T)
\end{equation}  
with the bilinear form 
$$
  k_T(w_h,v_h) := h^{-(d+2)} ~ (w_h,1)_{T} ~ (v_h,1)_{T}
$$
which is taylored to eliminate the kernel $\{ u_h|_{T} = const \}$. By testing with a constant function one easily sees that if $w_h$ solves \eqref{eq:local}, we have $k_T(w_h,v_h) = 0$ for all $v_h \in \mathcal{P}_0^k(T)$.
Instead of adding $k_T(\cdot,\cdot)$ any other approach which makes \eqref{eq:local} uniquely solvable 
and ensures $a_T(w_h,v_h) = N^c_T(v_h,u_h)$ for all $v_h \in \mathcal{P}^k(T)$ is possible.
Note that the local element matrices $\mathbf{A}$ and $\mathbf{N}_c$ corresponding to the bilinear form $a_T(\cdot,\cdot)$ and $N^c_T(\cdot,\cdot)$ have to be computed anyway. 
The only new component, the rank 1 element matrix $\textbf{K}$ corresponding to $k_T(\cdot,\cdot)$ is easily obtained by scaling of a corresponding element matrix with respect to a reference element.
We thus get the coefficients $\mathbf{w}$ of the local lifting ($\mathbf{w}_i = \mathcal{L}(\varphi_i)$) as $
    \mathbf{w} =  \mathbf{L} \cdot \mathbf{u}$ with $\mathbf{L} = (\mathbf{A}+\mathbf{K})^{-1} \mathbf{N}_c^T$.
The overall element contribution to the bilinear form $A^\ast(\cdot,\cdot)$ in matrix notation is 
$$
 \mathbf{A} +  \mathbf{N}_c +  \mathbf{N}_c^T +  2 \cdot \mathbf{L}^T\mathbf{A}\mathbf{L} +  \mathbf{N}_{s,1} 
$$
where $\mathbf{N}_{s,1}$ is the element matrix corresponding to $ N^s_1(\cdot,\cdot)$. 
The element matrix corresponding to the original formulation, cf. Section \ref{sec:fittedpoisson}, takes the form
$
 \mathbf{A} +  \mathbf{N}_c +  \mathbf{N}_c^T +  \lambda \mathbf{N}_{s,1} 
$. We note that the only additional effort of the new formulation is in the computation of the local matrices $\mathbf{K}$, $\mathbf{L}$ and $2 \cdot \mathbf{L}^T\mathbf{A}\mathbf{L}$.  

\section{Numerical example}
We consider a simple numerical example to compare the original and the modified method. 
The impact of a penalty parameter that has been chosen too large is small for the fitted case as diagonal preconditioning gives robustness of the linear system with respect to $\lambda$. As this is not true for the unfitted interface problem, cf. \cite{reuskenlehrenfeld14}, we consider the discretization of an unfitted interface problem and take an example from \cite[Section 4.1]{lehrenfeld2016analysis}. On the domain $\Omega=[2.01,2.01]^2$ we consider the interface $\Gamma = \{\Vert x \Vert_4 = 1\}$ which is approximated with a piecewise planar reconstruction $\Gamma_h$. We choose the diffusion coefficients $(\alpha_1,\alpha_2)=(1,2)$ so that the solution has a kink across $\Gamma$. 
For the approximation of the solution we consider the finite element formulation as in Section \ref{sec:nuif} with $k=1$, i.e. piecewise linear functions and the modified formulation as in Section \ref{sec:moduip}. The right hand side and the Dirichlet data are chosen such that the solution is given as 
\begin{equation*}
 u(x) = \left\{ \begin{array}{rc} 1 + \frac{\pi}{2} - \sqrt{2} \cdot \cos(\frac{\pi}{4} \Vert x \Vert_4^4), & \ x \in \Omega_1, \\
\frac{\pi}{2} \Vert x \Vert_4\hphantom{)}, & \ x \in \Omega_2.  \end{array} \right.
\end{equation*}
We consider a uniform triangular grid obtained by dividing a rectangular $16\times 16$ mesh into triangles which results in $512$ unknowns. Note that this implies a very shape regular mesh. 
We computed the spectral condition number $\kappa$ of the diagonally preconditioned system matrix for different values of $\lambda$. 
\begin{table}[h!]
\small
\begin{center}
\begin{tabular}{cccccccccccccccc}
\toprule
$\lambda$    &
1 & 2 & 4 & 8 & 16 & 32 & 64 & 128 & 256 & 512 & 1024 & 2048 & 4096 & 8192
\\
\midrule
$\kappa$     &
- & - & - & - &   86.3  &   81.6  &   79.2  &   83.2  &   88.0  &   91.3  &   116.0  &   221.4  &   427.4  &   830.3 
\\
\bottomrule
\end{tabular}
\end{center}
\vspace*{-0.5cm}
\caption{Dependency of the conditioning of the Nitsche formulation on the penalty parameter $\lambda$.}
\label{tab:lam}
\vspace*{-0.5cm}
\end{table}
The results are shown in Table \ref{tab:lam}. 
We observe that the standard discretization with $N^s(\cdot,\cdot)=N_\lambda^s(\cdot,\cdot)$ is only stable for values $\lambda > 8$. Within a range of one order of magnitude the condition number seems to be essentially independent of $\lambda$. However, for $\lambda \geq 256$ the condition number increases linearly with $\lambda$. 
For the modified (parameter-free) formulation with $N^s(\cdot,\cdot) = N^{nn}(\cdot,\cdot) + N_1^s(\cdot,\cdot)$ we obtain a condition number of $86.9$, i.e. a condition number which is close to an optimal choice of $\lambda$.

Independent of $\lambda$ (for $\lambda \geq 16$) we further observed optimal order of convergence for the $L^2(\Omega)$ and the $H^1(\Omega_1 \cup \Omega_2)$ norm of the error. This holds also true for the modified formulation. Further, we observed that the error seems to be essentially independent of the choice of $\lambda$. This does not hold for the $L^2(\Gamma)$ norm of the jump which decreases linearly with increasing $\lambda$. 

\section{Conclusion}
We presented an approach to eliminate the penalty parameter from symmetric Nitsche formulations. 
Modified Nitsche formulations have been proposed which are based on well-known formulations from the literature. The new formulations inherit the good properties while the stability of the formulations no longer depends on a stabilization parameter that has to be chosen sufficiently large. We explained the procedure for two important applications of Nitsche's method and demonstrated its potential on a simple numerical example.

\bibliographystyle{ieeetr}
\bibliography{literature}

\end{document}